\def\NI{\noindent}
\newcommand{\ep}{\varepsilon}
\newcommand{\lin}{{\rm lin}\hskip0.02cm}
\newcommand{\im}{{\rm im}\hskip0.02cm}
\newcommand{\cl}{{\rm cl}\hskip0.02cm}
\newcommand{\Bbb}[1]{{\bf #1}}
\newtheorem{theorem}{Theorem}
\newtheorem{definition}{Definition}
\newtheorem{lemma}{Lemma}
\newtheorem{problem}{Problem}
\newtheorem{proposition}{Proposition}
\newenvironment{remark}[1]{\medskip\par\noindent{\bf #1.}\rm}
{\medskip\par\noindent}
\newenvironment{proof}[1]{\medskip\par\noindent{\sc #1.\ }}
{~\rule{0.5em}{0.5em}\medskip\par}
\begin{document}
\title{\LARGE{\bf{Compositions of projections in Banach spaces and relations between approximation properties}}}

\author{{\sc M.~I.~Ostrovskii}\\
\\
Department of Mathematics and Computer Science\\
St.-John's University\\
8000 Utopia Parkway\\
Queens, NY 11439, USA\\
e-mail: {\tt ostrovsm@stjohns.edu}}

\date{\today}
\maketitle

\begin{large}

\begin{remark}{Abstract} A necessary and sufficient
condition for existence of a Banach space with a finite
dimensional decomposition but without the $\pi$-property in terms
of norms of compositions of projections is found.
\end{remark}

\begin{small}
\noindent {\it 2000 Mathematics Subject Classification.} Primary
46B15; Secondary 46B07, 46B28.
\end{small}
\bigskip

The problem of existence of Banach spaces with the $\pi$-property
but without a finite dimensional decomposition is one of the
well-known open problems in Banach space theory. It was first
studied by W.~B.~Johnson \cite{Jo}. P.~G.~Casazza and N.~J.~Kalton
\cite{CK} found important connections of this problem with other
problems of Banach space theory. See in this connection the survey
\cite{casazza01}.
\medskip

Recall the definitions. A separable Banach space $X$ has the
$\pi$-{\it property} if there is a sequence $T_n:X\to X$ of finite
dimensional projections such that
$$(\forall x\in X)(\lim_{n\to\infty}||x-T_nx||=0).$$
If in addition the projections satisfy
$$(\forall n,m\in\Bbb N)(T_nT_m=T_{\min(m,n)}),$$
then $X$ has a {\it finite dimensional decomposition}.
\medskip

\begin{problem}\label{P:FDD} {\sl Does every separable
Banach space with the $\pi$-property have a finite dimensional
decomposition?}
\end{problem}

The purpose of this paper is to find an equivalent reformulation
of Problem \ref{P:FDD} in terms of norms of compositions of
projections. In the second part of the paper we discuss related
problems on compositions of projections.
\medskip

{\it Relative  projection constant} of a finite dimensional
subspace $Y$ in a normed space $X$ is defined by
$$\lambda (Y,X)=\inf\{||P||:\ P:X\to X\hbox{ is a projection
onto } Y\}.$$ In the case when $X=L_\infty(\mu)$, the constant
$\lambda(Y,X)$ is also denoted $\lambda(Y)$ (it is well known that
$\lambda(Y,L_\infty(\mu))$ depends on $Y$ only, and not on the way
in which $Y$ is embedded into $L_\infty(\mu)$).

\begin{theorem}\label{T:1} A separable
Banach space $X$ is a space with the $\pi$-pro\-per\-ty but
without a finite-dimensional decomposition if and only if there
exists an increasing sequence $\{X_i\}_{i=1}^\infty$ of
finite--dimensional subspaces of $X$ satisfying the conditions:

{\rm (a)} $\sup_i\lambda(X_i,X)<\infty$,

{\rm (b) $\cl (\cup_{i=1}^\infty X_i)=X,$

(c) }For every subsequence $\{X_{i_n}\}_{n=1}^\infty\subset
\{X_i\}_{i=1}^\infty$ and every sequence $\{P_n\}_{n=1}^\infty$ of
projections, $P_n:X_{i_{n+1}}\to X_{i_n}$, the following is true:

\begin{equation}\label{E:infty}
\sup_{k,l\in{\Bbb N},\ k<l} ||P_kP_{k+1}\dots P_{l-1}
P_l||=\infty.
\end{equation}
\end{theorem}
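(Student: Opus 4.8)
The plan is to isolate the combinatorial core of Theorem \ref{T:1} as the following equivalence and to deduce both implications from it: \emph{for any increasing sequence $\{X_i\}$ of finite-dimensional subspaces satisfying} (a) \emph{and} (b), \emph{the space $X$ has a finite-dimensional decomposition if and only if} (c) \emph{fails}, i.e. if and only if there are a subsequence $\{X_{i_n}\}$ and projections $P_n\colon X_{i_{n+1}}\to X_{i_n}$ with $\sup_{k<l}\|P_kP_{k+1}\cdots P_l\|<\infty$. Granting this, the theorem follows quickly. First, (a) and (b) already force the $\pi$-property: choosing projections $Q_i\colon X\to X_i$ with $\|Q_i\|\le\lambda(X_i,X)+1\le M$, each $Q_i$ fixes $X_i$, so for $x\in X_j$ one has $Q_ix=x$ whenever $i\ge j$, and density of $\cup_i X_i$ together with the uniform bound gives $Q_ix\to x$ for every $x$. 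Conversely, the $\pi$-property yields finite-dimensional projections $T_n\to I$ strongly with $\sup_n\|T_n\|<\infty$, and a routine gliding-hump construction produces an increasing sequence with dense union whose members lie close to the ranges $\im T_n$ and hence have uniformly bounded relative projection constants; thus an (a)--(b) sequence always exists. The direction $(\Rightarrow)$ of the theorem then follows from the easy half of the equivalence in contrapositive form, and $(\Leftarrow)$ from the hard half.

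For the easy half, \emph{failure of} (c) \emph{implies an FDD}, I would argue by a telescoping limit. If $\sup_{k<l}\|P_k\cdots P_l\|=C<\infty$, put $\Pi_{n,m}=P_nP_{n+1}\cdots P_{m-1}\colon X_{i_m}\to X_{i_n}$ for $n<m$ and $\Pi_{n,n}=\mathrm{id}$. Since each $P_r$ fixes $X_{i_r}$, each $\Pi_{n,m}$ is a projection onto $X_{i_n}$, the family is compatible ($\Pi_{n,m}\Pi_{m,l}=\Pi_{n,l}$), and $\|\Pi_{n,m}\|\le C$. For $x\in X_{i_m}$ the value $\Pi_{n,m}x$ is independent of $m\ge n$ by compatibility, so $Q_nx:=\Pi_{n,m}x$ is well defined and bounded by $C\|x\|$ on the dense set $\cup_m X_{i_m}$; extend it to $Q_n\colon X\to X_{i_n}$. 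These are uniformly bounded nested projections with $Q_nQ_m=Q_{\min(n,m)}$ and $Q_n\to I$ strongly, so the differences $Q_n-Q_{n-1}$ are the partial-sum projections of a finite-dimensional decomposition of $X$. This already proves $(\Rightarrow)$: starting from an (a)--(b) sequence, if (c) failed we would obtain an FDD, contrary to hypothesis.

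The hard half, \emph{an FDD implies failure of} (c), is where I expect the main difficulty, and it yields $(\Leftarrow)$. Let $R_j\colon X\to Z_j$ be the uniformly bounded nested partial-sum projections of the decomposition, so $R_j\to I$ strongly and $R_jR_{j'}=R_{\min(j,j')}$; the point is that products of the $R_j$ telescope exactly. The strategy is to interlace the given subspaces with the $Z_j$: choose indices so that $X_{i_n}$ is almost contained in $Z_{j_n}$ (using that $R_j\to I$ uniformly on the sphere of the finite-dimensional $X_{i_n}$) and $Z_{j_n}$ is almost contained in $X_{i_{n+1}}$ (using density of $\cup_i X_i$), with the successive errors summable. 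One then defines $P_n\colon X_{i_{n+1}}\to X_{i_n}$ as a perturbation of $R_{j_n}$, correcting its range into $X_{i_n}$ by composing with a bounded projection onto $X_{i_n}$ supplied by (a). The main obstacle is to show that the products $P_k\cdots P_l$ stay uniformly bounded: they differ from the exactly telescoping products $R_{j_k}\cdots R_{j_l}=R_{j_k}$ only by an accumulation of the interlacing errors, so one must invoke the principle of small perturbations and a Neumann-type stability estimate, arranged so that summability of the errors keeps the entire doubly-indexed family of products bounded. Once this is achieved, $\{X_{i_n}\}$ and $\{P_n\}$ witness the failure of (c), so (c) forces $X$ to have no FDD, completing $(\Leftarrow)$.
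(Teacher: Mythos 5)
Your overall architecture and your easy half are exactly the paper's: the ``only if'' direction is proved there, as here, by assuming bounded compositions, observing that the partial products are eventually constant on $\cup_n X_{i_n}$ and uniformly bounded, passing to strong limits $Q_n$, and checking $Q_nQ_m=Q_{\min(n,m)}$, which yields an FDD; likewise the derivation of the $\pi$-property from (a)--(b) and the existence of an (a)--(b) sequence from the $\pi$-property (via the standard perturbation argument of Johnson--Rosenthal--Zippin) match the paper. That part of your proposal is correct.

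The hard half, however, has a genuine gap at exactly the point you flag as ``the main obstacle,'' and your proposed mechanism for overcoming it does not work. You claim that the products $P_k\cdots P_l$ built from your $P_n$ --- a perturbation of $R_{j_n}$ ``corrected'' by composing with a bounded projection $Q_n$ onto $X_{i_n}$ --- differ from the exactly telescoping $R_{j_k}\cdots R_{j_l}=R_{j_k}$ only by the accumulated interlacing errors. For the operator you describe, essentially $P_n\approx Q_nR_{j_n}$, this is false: the telescoping defect $P_nP_{n+1}-P_n$ contains the term $Q_nR_{j_n}(I-Q_{n+1})R_{j_{n+1}}$, and $(I-Q_{n+1})$ restricted to $Z_{j_{n+1}}$ is of order $1$ no matter how fine the interlacing, because the approximate inclusions go the wrong way ($X_{i_{n+1}}$ is almost inside $Z_{j_{n+1}}$, not conversely), and the kernel of $Q_{n+1}$ bears no relation to the decomposition, so $R_{j_n}$ applied afterwards does not kill it. Thus the defect is not $O(\delta_n)$, there is nothing summable for a Neumann-type estimate to bite on, and the boundedness of long products is unproved. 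The paper avoids this in two steps that are absent from your sketch: first (Proposition \ref{FDD} and Lemma \ref{step}) it blocks and perturbs the given FDD into a new one $\{\tilde U_i\}$ whose partial sums interlace a subsequence $\{\tilde X_i\}\subset\{X_i\}$ \emph{exactly}, $\tilde Z_i\subset\tilde X_i\subset\tilde Z_{i+1}$ --- this exact swallowing of a prescribed finite-dimensional $H$ into a perturbed block is the real work; second, it uses the specific formula $P_n=R_n+(I-R_n)Q_n(R_{n+1}-R_n)$, in which $Q_n$ is sandwiched between FDD differences so that the identities $R_nR_{n+1}=R_n$ and $R_n(I-R_{n+1})=0$ give $P_nP_{n+1}=P_n$ \emph{exactly}; no error accumulates at all, and the exact inclusions guarantee that each $P_n$ is an honest projection of $\tilde X_{n+1}$ onto $\tilde X_n$, as condition (c) requires. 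Without the exact interlacing or a formula of this kind, your construction does not yield uniformly bounded compositions, so the ``if'' direction remains unestablished in your proposal.
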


\begin{proof}{Proof} The ``only if'' part of the theorem is a slight modification of
Theorem 3 from W.B.~John\-son \cite{Jo}. We sketch its proof for
convenience of the reader. Let $X$ be a separable Banach space
with the $\pi$-property but without a finite dimensional
decomposition. Using the standard perturbation argument (see, for
example, \cite{JRZ}) we get that there exists an increasing
sequence $\{X_i\}_{i=1}^\infty$ of finite--dimensional subspaces
of $X$ satisfying the conditions (a) and (b). Suppose that
$\{X_i\}_{i=1}^\infty$ does not satisfy (c). Then there exists a
subsequence $\{X_{i_n}\}_{n=1}^\infty\subset \{X_i\}_{i=1}^\infty$
and a sequence $\{P_n\}$ of projections; $P_n:X_{i_{n+1}}\to
X_{i_n}$ such that
\begin{equation}\label{cont}
\sup_{k,l\in{\Bbb N},\ k<l} ||P_kP_{k+1}\dots P_{l-1}
P_l||<\infty.
\end{equation}

Let us define operators $T_k^n:X_{i_k}\to X_{i_n}$ by $T_k^nx=P_n
P_{n+1}\dots P_{k-1}x$ for $k>n,\ k,n\in\Bbb N$. Then the sequence
$\{T^n_kx\}_{k=n+1}^\infty$ is eventually constant for every
$x\in\cup_{n=1}^\infty X_{i_n}$. The inequality (\ref{cont})
implies that the sequence $\{T_k^n\}_{k=n+1}^\infty$ is uniformly
bounded. Hence it is strongly convergent. We denote its strong
limit by $T_n$. It is easy to see that $T_n$ is a continuous
projection onto $X_{i_n}$. Therefore $T_iT_j=T_j$ for $i\ge j$.
Now let $i<j$. We have
$$T_iT_jx=s-\lim_{m\to\infty}(P_i\cdot\dots\cdot P_{m-1})(T_jx)=
P_i\cdot\dots\cdot P_{j-1}(T_jx)=T_ix.$$ Hence $X$ has a finite
dimensional decomposition, contrary to the assumption.
\medskip

We turn to the ``if'' part of the theorem. We assume that $X$
contains an increasing sequence $\{X_i\}_{i=1}^\infty$ of finite
dimensional subspaces satisfying the conditions (a)-(c). It is
clear that $X$ has the $\pi$-property. In order to show that $X$
does not have a finite-dimensional decomposition, assume the
contrary. Then $X$ contains an increasing sequence
$\{Z_i\}_{i=1}^\infty$ of finite--dimensional subspaces, such that
$$\cl\left(\bigcup_{i=1}^\infty Z_i\right)=X,$$
and there exist pairwise commuting projections $T_i:X\to Z_i$ with
$\im T_i=Z_i$, for which $\sup_i||T_i||<\infty$.
\medskip

We need the following analogue of \cite[Proposition 1.a.9
(i)]{LT1} for finite-dimensional decompositions (it can be proved
using the same argument), see \cite[Section 1.g]{LT1} for
terminology related to finite dimensional decompositions.

\begin{proposition}\label{FDD} Let $\{W_i\}_{i=1}^\infty$ be a finite dimensional decomposition of $X$ with the
decomposition constant $K$. Let $E_i:W_i\to X$ be linear operators
satisfying $||E_iw-w||\le\ep_i||w||$ for each $w\in W_i$, where
$\ep_i>0$ are such that $\sum_{i=1}^\infty\ep_i<1/(2K)$. Then the
spaces $\{E_i(W_i)\}_{i=1}^\infty$ also form a finite dimensional
decomposition of $X$.
\end{proposition}

Let $U_i=(T_i-T_{i-1})X$ (we let $T_0=0$). Proposition \ref{FDD}
implies that we may assume without loss of generality that each
$U_i$ is contained in some $X_{n_i}$. Our next purpose is to show
there exist a finite dimensional decomposition $\{\tilde
U_i\}_{i=1}^\infty$ and a subsequence $\{\tilde X_i\}\subset
\{X_i\}$, such that for $\tilde Z_i=\tilde
U_1\oplus\dots\oplus\tilde U_i$ the condition
\begin{equation}\label{in}\tilde Z_i\subset\tilde X_i\subset\tilde Z_{i+1}\
(\forall i\in{\Bbb N})\end{equation} is satisfied. Our proof of
this fact uses induction and the following lemma.

\begin{lemma}\label{step} Let $\{V_i\}_{i=1}^\infty$ be a finite
dimensional decomposition of a Banach space $X$, let $H$ be a
finite dimensional subspace of $X$ satisfying $V_i\subset H$ for
$i=1,\dots,k$, and let $\ep>0$. Then there exists a blocking
$\{Y_i\}_{i=1}^\infty$ of the decomposition
$\{V_i\}_{i=1}^\infty$, such that $Y_i=V_i$ for $i=1,\dots k$,
$Y_{k+j}=V_{m+j}$ for some $m\ge k$ and all $j\ge 2$, and
$Y_{k+1}=V_{k+1}\oplus V_{k+2}\oplus \dots\oplus V_{m+1}$; and
there exists an operator $A:Y_{k+1}\to X$ satisfying the following
three conditions:
\begin{equation}\label{small}
||Ay-y||\le\ep||y||~ \forall y\in Y_{k+1},\end{equation}
\begin{equation}\label{inX} A(Y_{k+1})\subset\lin\left( (V_1\oplus\dots\oplus
V_{m+1})\bigcup H\right),
\end{equation}
\begin{equation}\label{X}
H\subset V_1\oplus V_2\oplus\dots\oplus V_k\oplus A(Y_{k+1}).
\end{equation}
\end{lemma}

\begin{proof}{Proof of Lemma \ref{step}} Let $S_i:X\to
V_1\oplus\dots\oplus V_i$ be the natural projections corresponding
to the decomposition. Let $m\in{\bf N}$ be such that $m\ge k$ and
\begin{equation}\label{d}
||S_{m+1}x-x||\le\delta||x||~\forall x\in H,\end{equation} where
$\delta>0$ is to be selected later. Let $U=S_{m+1}H$. Observe that
$S_{m+1}|_{V_1\oplus\dots\oplus V_k}$ is the identity operator,
and hence $V_1\oplus\dots\oplus V_k\subset U$. Using the standard
perturbation argument (see \cite[Proposition 5.3]{qm}) we can
estimate the projection constant of $U$ in terms of $\delta$ and
$\lambda(H,X)$ (when $\delta$ is small). Hence
$V_1\oplus\dots\oplus V_{m+1}=U\oplus C$ for some subspace $C$,
where the norms of projections onto $U$ and $C$ are estimated in
terms of $\delta$ and $\lambda(H,X)$. This fact and the estimate
(\ref{d}) allow us to claim that the operator
$A:V_1\oplus\dots\oplus V_{m+1}\to X$ defined by
$A(u+c)=S_{m+1}^{-1}(u)+c$ for $u\in U$, $c\in C$ satisfies
(\ref{small}) if $\delta>0$ is selected to be small enough. The
condition (\ref{inX}) follows immediately from the definition of
$A$. To finish the proof it remains to observe that $Ax=x$ for
$x\in V_1\oplus\dots\oplus V_k$
\end{proof}

Now we use Lemma \ref{step} to find $\{\tilde X_i\}$ and $\{\tilde
U_i\}$. In each step we shall also find a new finite dimensional
decomposition $\{U_i^j\}_{i=1}^\infty$. Let $\ep_i>0$,
$(i=2,3\dots)$ be such that $\sum_{i=2}^\infty\ep_i<1/(2K)$.
\medskip

In the first step we let $\tilde U_1=U_1$, $\tilde X_1$ be any
$X_{n_1}$ satisfying the condition $U_1\subset X_{n_1}$, and
$\{U^1_i\}_{i=1}^\infty=\{U_i\}_{i=1}^\infty$.
\medskip

In the second step we use Lemma \ref{step} with $H=\tilde X_1$,
$k=1$, $\ep=\ep_2$, and
$\{V_i\}_{i=1}^\infty=\{U^1_i\}_{i=1}^\infty$. We let
$$\{U^2_i\}_{i=1}^\infty=\left\{U^1_1,A(Y_2),U^1_{m+2},U^1_{m+3},\dots\right\}.$$
By Proposition \ref{FDD} $\{U^2_i\}_{i=1}^\infty$ is also a finite
dimensional decomposition. We let $\tilde U_2=A(Y_2)$, $\tilde
X_2$ be any $X_{n_2}$ such that $n_2>n_1$ and $\tilde U_2\subset
X_{n_2}$. Such $n_2$ exists by the condition (\ref{inX}).
\medskip

In the third step we use Lemma \ref{step} with $H=\tilde X_2$,
$k=2$, $\ep=\ep_3$, and
$\{V_i\}_{i=1}^\infty=\{U^2_i\}_{i=1}^\infty$. Re-using the
notation $A,Y_i,m$ of Lemma \ref{step} for different objects than
in the previous step, we let
$$\{U^3_i\}_{i=1}^\infty=\left\{U^2_1,U^2_2,A(Y_3),U^2_{m+2},U^2_{m+3},\dots\right\}.$$
By Proposition \ref{FDD} $\{U^3_i\}_{i=1}^\infty$ is also a finite
dimensional decomposition. Here a bit more explanation is needed.
Observe that $\{U^3_i\}_{i=1}^\infty$ is obtained from
$\{U_i\}_{i=1}^\infty$ by making two blocks and perturbing them,
one of them is perturbed no more than for $\ep_2$ (in the sense of
the inequality (\ref{small})), the other for no more than $\ep_3$,
therefore we are in a position to apply Proposition \ref{FDD}.
\smallskip

We let $\tilde U_3=A(Y_3)$, $\tilde X_3$ be any $X_{n_3}$
satisfying $n_3>n_2$ and $\tilde U_3\subset X_{n_3}$. Such $n_3$
exists by the condition (\ref{inX}).
\medskip

We continue in an obvious way. The fact that the condition
(\ref{in}) is satisfied is clear from the construction (see the
condition (\ref{X}) in Lemma \ref{step}). It remains to check that
$\{\tilde U_i\}_{i=1}^\infty$ form a finite dimensional
decomposition of $X$. To see this observe that $\tilde U_i$ are
$\ep_i$-perturbations of a blocking of $\{U_i\}_{i=1}^\infty$.
Recalling the choice of $\ep_i$ and using Proposition \ref{FDD},
we get the desired statement.
\medskip

Let $Q_n:X\to\tilde X_n$ be some projections with
$\sup_n||Q_n||<\infty$ and $\im Q_n=\tilde X_n$. Let $R_n:X\to
\tilde Z_n$ be projections corresponding to the decomposition
$\{\tilde U_i\}_{i=1}^\infty$. We introduce new projections
$P_n:X\to \tilde X_n$ with $\im P_n=\tilde X_n$ as:
$$P_n=R_n+(I-R_n)Q_n(R_{n+1}-R_n).$$

Let us show that $P_n$ are projections onto $\tilde X_n$ and
$P_nP_{n+1}=P_n$.
\medskip

If $x\in\tilde X_n$, then $x=R_{n+1}x=R_nx+(R_{n+1}-R_n)x$. Since
$(R_{n+1}-R_n)x\in\tilde X_n$, then
$Q_n(R_{n+1}-R_n)x=(R_{n+1}-R_n)x$. Hence
$$x=R_nx+(I-R_n)Q_n(R_{n+1}-R_n)x.$$

Let us show that $\im P_n\subset\tilde X_n$. The condition
(\ref{in}) implies that $\im R_n\subset\tilde X_n$. Therefore
$(I-R_n)\tilde X_n\subset\tilde X_n$, and $P_n$ is a projection
onto $\tilde X_n$.
\medskip

Let us show that $P_nP_{n+1}=P_n$. In fact,
$$P_nP_{n+1}=(R_n+(I-R_n)Q_n(R_{n+1}-R_n))
\left(R_{n+1}+(I-R_{n+1})Q_{n+1}(R_{n+2}-R_{n+1})\right)=$$
$$R_n+(I-R_n)Q_n(R_{n+1}-R_n)=P_n.$$

It follows that $\{P_n\}$ is a uniformly bounded commuting
sequence of projections onto $\{\tilde X_n\}$. We get a
contradiction with the condition (\ref{E:infty}).
\end{proof}

Theorem \ref{T:1} shows that one of the natural approaches to
Problem \ref{P:FDD} is to start with the following problem on
composition of projections. A projection of a Banach space $X$
onto its subspace $Y$ is called {\it minimal}, if its norm is
equal to $\lambda(Y,X)$, and {\it close-to-minimal}, if its norm
is close to $\lambda(Y,X)$.
\medskip

Consider a triple $(X_1,X_2,X_3)$ of Banach spaces satisfying
$X_1\subset X_2\subset X_3$.
Assume that $X_1$ and $X_2$ are finite dimensional.
\medskip

\begin{problem} {\sl Is it possible to find a close-to-minimal
projection $P:X_3\to X_1$ which can be factored
as $P=P_1P_2$, where $P_2:X_3\to X_2$ is a close-to-minimal
projection onto $X_2$ and $P_1:X_2\to X_1$ is $P|_{X_2}$?}
\end{problem}

Some related observations.

\begin{proposition}\label{P1}
Each projection $P:X_3\to X_1$ has a factorization of the form
$P=P_1P_2$, where  $P_2:X_3\to X_2$ and $P_1:X_2\to X_1$ are
projections.
\end{proposition}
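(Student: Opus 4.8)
The plan is to build $P_2$ first and then let $P_1$ be forced by the requirement $P=P_1P_2$. Since $X_1\subseteq X_2\subseteq X_3$ and $P$ is a projection onto $X_1$, the operator $P$ maps $X_2$ into $X_1\subseteq X_2$; hence the restriction $P|_{X_2}$ is a bounded idempotent on $X_2$ with image $X_1$, that is, a projection of $X_2$ onto $X_1$. I would therefore set $P_1=P|_{X_2}$ from the outset. With this choice the identity $P_1P_2=P$ reads $P(P_2x)=Px$ for all $x\in X_3$, i.e. $(P_2-I)x\in\ker P$, or equivalently $\ker P_2\subseteq\ker P$. Thus the whole problem reduces to producing a bounded projection $P_2$ of $X_3$ onto $X_2$ whose kernel is contained in $\ker P$.

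To produce such a $P_2$, I would first split $X_2$. Because $P|_{X_2}$ is a projection onto $X_1$, its kernel is $M:=X_2\cap\ker P$ and $X_2=X_1\oplus M$, where $M$ is finite dimensional. Next observe that $X_3=X_1\oplus\ker P$ (the topological decomposition furnished by the bounded projection $P$) and that $M\subseteq\ker P$. Hence, if I can find a closed complement $N$ of $M$ inside $\ker P$, then $\ker P=M\oplus N$ and therefore $X_3=X_1\oplus M\oplus N=X_2\oplus N$ with $N\subseteq\ker P$. The projection $P_2$ of $X_3$ onto $X_2$ along $N$ then satisfies $\ker P_2=N\subseteq\ker P$, exactly as required. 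Explicitly, letting $\pi:\ker P\to M$ be the bounded projection with kernel $N$, one checks $P_2=P+\pi(I-P)$.

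The one genuine ingredient---and the step I would flag as the crux---is the existence of the closed complement $N$ of $M$ in $\ker P$. This is precisely where the finite dimensionality of $X_2$, hence of $M$, enters: every finite dimensional subspace of a Banach space is complemented by a bounded projection, so $\pi$ and $N=\ker\pi$ exist. The remaining verifications are routine. The operator $P_2=P+\pi(I-P)$ is bounded, being a sum and composition of bounded maps; it fixes $X_2=X_1\oplus M$ pointwise and has image contained in $X_1+M=X_2$, so $\im P_2=X_2$ and $P_2^2=P_2$; and finally, since $\pi(I-P)x\in M\subseteq\ker P$, we get $P_1P_2x=P(Px)+P(\pi(I-P)x)=Px+0=Px$. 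Hence $P=P_1P_2$ with both factors projections, as claimed.
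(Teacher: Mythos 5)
Your proof is correct and is essentially the paper's own argument, fully fleshed out: the paper likewise takes $\ker P_1=\ker P\cap X_2$ (i.e.\ $P_1=P|_{X_2}$) and lets $\ker P_2$ be a complement of this finite-dimensional subspace inside $\ker P$. Your identification of the crux --- complementability of the finite-dimensional $M=X_2\cap\ker P$ in $\ker P$ --- matches the paper's parenthetical remark exactly, and your explicit formula $P_2=P+\pi(I-P)$ is just a concrete realization of that projection.
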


In fact, let $\ker P_1=\ker P\cap X_2$. Let $\ker P_2$ be a
complement of $\ker P_1$ in $\ker P$ (such complement exists
because $\ker P_1$ is finite dimensional).

\begin{proposition}\label{example} There exist
triples $(X_1,X_2,X_3)$ and minimal projections $P:X_3\to X_1$
which cannot be factored as $P_1P_2$, where $P_2$ is a minimal
projection onto $X_2$.
\end{proposition}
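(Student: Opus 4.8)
The plan is to exhibit a concrete low-dimensional example where minimality forces an incompatibility. I would look for a triple $(X_1,X_2,X_3)$ where the unique (or essentially unique) minimal projection $P:X_3\to X_1$ has a kernel that ``interacts'' with $X_2$ in a way that prevents the factor $P_2:X_3\to X_2$ from being minimal. By Proposition \ref{P1}, any projection $P:X_3\to X_1$ factors as $P_1P_2$ with $\ker P_2$ a complement of $\ker P_1=\ker P\cap X_2$ inside $\ker P$; the point is that the minimal $P_2$ dictates its own kernel inside $X_3$, and this kernel need not be compatible with $\ker P$. So the strategy is to engineer a situation where $\ker P$ is rigidly determined by minimality of $P$, while $\ker P_2$ is rigidly determined by minimality of $P_2$, and these two requirements clash.

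First I would choose very small dimensions, say $\dim X_1=1$, $\dim X_2=2$, $X_3=\ell_\infty^n$ or a small $\ell_p^n$-type space, so that relative projection constants and minimal projections can be computed explicitly. For a one-dimensional $X_1=\lin\{e\}$, a minimal projection onto $X_1$ corresponds to a norming functional of minimal norm, and its kernel is the hyperplane annihilated by that functional; such minimal projections onto lines are often unique, which is exactly the rigidity I want. Simultaneously I would pick $X_2$ and the norm on $X_3$ so that the minimal projection $P_2:X_3\to X_2$ has a uniquely determined kernel $\ker P_2$.

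The key computation is then to verify that the kernel $\ker P$ of the minimal projection $P:X_3\to X_1$ does not contain any hyperplane-complement of the form demanded by a minimal $P_2$. Concretely, if $P=P_1P_2$ with $P_2$ minimal, then $\ker P\supset\ker P_2$, so one must have $\ker P_2\subset\ker P$; I would produce an example where the minimal $P_2$'s kernel is forced to be a specific subspace that is \emph{not} contained in $\ker P$. Equivalently, I would show that every projection $P_2:X_3\to X_2$ with $\ker P_2\subset\ker P$ has norm strictly larger than $\lambda(X_2,X_3)$.

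The main obstacle I expect is the explicit determination and comparison of the two minimal projections: proving uniqueness (or sufficient rigidity) of the minimal projection onto $X_1$ and of the minimal projection onto $X_2$, and then checking that their kernels are genuinely incompatible rather than accidentally aligned. This requires a carefully chosen norm on $X_3$ where both minimal-projection problems are tractable yet the geometry is asymmetric enough to separate the two optimal kernels; balancing tractability against the required incompatibility is the delicate part, and I would likely settle on a small polyhedral or $\ell_p$ norm where the optimality conditions reduce to a finite, checkable system.
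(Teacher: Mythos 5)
You have correctly identified the right abstract reformulation: since any projection $P_2$ onto $X_2$ fixes $X_2$ pointwise, a factorization $P=P_1P_2$ forces $P_1=P|_{X_2}$ and is equivalent to $\ker P_2\subset\ker P$, so the proposition amounts to exhibiting a minimal $P:X_3\to X_1$ whose kernel contains the kernel of no minimal projection onto $X_2$. But this is where your argument stops. You never produce a triple, never establish the uniqueness or rigidity of either minimal projection in a concrete norm, and never carry out the kernel-incompatibility verification; every load-bearing step is phrased as ``I would choose,'' ``I would produce,'' ``the key computation is then to verify.'' The existence of an example with exactly these engineered properties \emph{is} the content of the proposition, and the step you defer as ``the delicate part'' is the entire proof. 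Moreover, the plan carries a real risk of failure as stated: in small polyhedral or $\ell_p$ examples, minimal projections onto $X_2$ are frequently non-unique, and you must rule out \emph{every} minimal $P_2$, not just one; nothing in your outline guarantees that a low-dimensional norm with the required double rigidity and misaligned kernels exists.

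It is instructive to compare with how the paper actually proves this, because the paper's route avoids uniqueness and kernel analysis altogether. It takes $\ell_2^k\subset\ell_2^n\subset L_\infty(\mu)$ with $k$ much smaller than $n$, and uses the fact (from the theory of sufficient enlargements, \cite[Section 3]{archiv}) that $\lambda(\ell_2^n)B_2^n$ is a \emph{minimal} sufficient enlargement of $\ell_2^n$: consequently every minimal projection $P_2:L_\infty(\mu)\to\ell_2^n$ satisfies $\cl(P_2(B_{L_\infty(\mu)}))=\lambda(\ell_2^n)B_2^n$. Then for \emph{any} projection $P_1:\ell_2^n\to\ell_2^k$ one gets $\cl(P_1P_2(B_{L_\infty(\mu)}))\supset\lambda(\ell_2^n)B_2^k$, hence $||P_1P_2||\ge\lambda(\ell_2^n)$, which is far larger than $\lambda(\ell_2^k)$, the norm of any minimal projection onto $\ell_2^k$. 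So no minimal $P$ can equal any composition $P_1P_2$ with $P_2$ minimal, by a quantitative norm gap rather than a kernel clash; the asymptotic growth $\lambda(\ell_2^n)\to\infty$ supplies the incompatibility that your plan hopes to extract from delicate low-dimensional rigidity. If you want to complete your approach, you would need to either execute the finite optimality-condition check you describe in a specific norm, or, more efficiently, replace kernel rigidity by a norm-gap argument of this kind.
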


In the proof of this result and in further discussion it is
convenient to use the notion of a sufficient enlargement. We
denote the ball of a Banach space $X$ by $B_X$, in the case when
$X=\ell_p^n$, we use the notation $B_p^n$.

\begin{definition} {\rm A bounded, closed, convex, $0$-symmetric set $A$ in a finite dimensional
normed space $X$ is called a {\it sufficient enlargement} for $X$
(or of $B_X$) if for arbitrary isometric embedding $X\subset Y$
($Y$ is a Banach space) there exists a projection $P:Y\to X$ such
that $P(B_Y)\subset A$. A {\it minimal sufficient enlargement} is
defined to be a sufficient enlargement no proper subset of which
is a sufficient enlargement.}
\end{definition}

It is easy to see that if $X$ is a subspace of $L_\infty(\mu)$ and
$P:L_\infty(\mu)\to X$ is a projection, then
$\cl(P(B_{L_\infty(\mu)}))$ is a sufficient enlargement of $B_X$.
See \cite{extracta}, \cite{archiv}, and \cite{mpcps} for results on sufficient enlargements.

\begin{proof}{Proof of Proposition \ref{example}} Consider a triple of the form $\ell^k_2\subset
\ell^n_2\subset L_\infty(\mu).$ The set $\lambda(\ell_2^n)B_2^n$
is a minimal sufficient enlargement of $\ell_2^n$ (see
\cite[Section 3]{archiv}). Therefore, if $P_2: L_\infty(\mu)\to
\ell^n_2$ is a minimal projection, then $\cl
(P_2(B_{L_{\infty}(\mu)}))=\lambda(\ell^n_2)B^n_2$. Hence, for an
arbitrary $P_1:\ell_2^n\to \ell_2^k$ we have
$\cl(P_1P_2(B_{L_{\infty}(\mu)}))=\cl(P_1(\lambda(\ell^n_2)B^n_2))\supset
\lambda(\ell^n_2)B^k_2,$ where we have an equality instead of an
inclusion if $P_1$ is orthogonal.
\medskip

Of course, if $k$ is much less than $n,$ then $\lambda(\ell^k_2)$
is much less than $\lambda(\ell^n_2)$, and the projection $P_1P_2$
is far from being minimal.
\end{proof}

On the other hand, there exist $P_1:\ell_2^n\to\ell_2^k$ and
$P_2:L_\infty(\mu)\to\ell_2^n$, such that $P_1P_2$ is a minimal
projection and $P_2$ is a close-to-minimal projection. To show
this we need the following observation about sufficient
enlargements.

\begin{lemma}\label{L1} Let $X$ and $Y$ be two finite dimensional
normed spaces and $X\oplus Y$ be their direct sum.
\medskip

Suppose that $X\oplus Y$ is endowed with a norm $||\cdot||$
satisfying the conditions
$$\begin{array}{cr}
\ & ||x||\leq||(x,y)||,~\forall(x,y)\in X\oplus Y\hskip2.3cm (1)  \\
\hbox{ and } & \  \\
\ & ||y||\leq||(x,y)||,~\forall(x,y)\in X\oplus Y.\hskip2cm (2) \\
\end{array}$$

\NI Let $A_X$ be a sufficient enlargement of $B_X$ and $A_Y$ be a
sufficient enlargement of $B_Y$. Then the Minkowski sum $A_X+A_Y$
is a sufficient enlargement for $(X\oplus Y,~||\cdot||)$.
\end{lemma}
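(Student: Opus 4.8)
The plan is to reduce the statement to constructing, for an arbitrary isometric embedding $X\oplus Y\subset Z$, two operators $P_1,P_2:Z\to X\oplus Y$ with $\im P_1\subseteq X$ and $\im P_2\subseteq Y$, such that $P_1(B_Z)\subseteq A_X$, $P_2(B_Z)\subseteq A_Y$, and $P_1(x,y)=x$, $P_2(x,y)=y$ for every $(x,y)\in X\oplus Y$. Granting this, the map $P=P_1+P_2$ satisfies $P(x,y)=(x,y)$ on $X\oplus Y$ and has image contained in $X\oplus Y$, hence $P$ is a projection onto $X\oplus Y$ (a linear map fixing a subspace and mapping into it is idempotent). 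Moreover $P(B_Z)\subseteq P_1(B_Z)+P_2(B_Z)\subseteq A_X+A_Y$, which is exactly what the definition of sufficient enlargement demands; here $A_X+A_Y$ is bounded, closed, convex and $0$-symmetric, being a Minkowski sum of such (hence compact, convex, symmetric) sets.

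To construct $P_1$ I would first observe that condition $(1)$ says precisely that the coordinate projection $Q_X:X\oplus Y\to X$, $Q_X(x,y)=x$, is a contraction from $(X\oplus Y,\|\cdot\|)$ into $(X,\|\cdot\|_X)$. Next, fix an isometric embedding $\iota:X\to \ell_\infty(\Gamma)$ (for instance $\Gamma=S_{X^*}$, so that $\|x\|_X=\sup_{f\in\Gamma}|f(x)|$). Since $A_X$ is a sufficient enlargement of $B_X$, its defining property applied to this embedding yields a projection $R:\ell_\infty(\Gamma)\to \iota(X)$ with $R(B_{\ell_\infty(\Gamma)})\subseteq \iota(A_X)$ and $R|_{\iota(X)}=\mathrm{id}$. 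Now $\iota\circ Q_X:X\oplus Y\to\ell_\infty(\Gamma)$ is a contraction, and because $\ell_\infty(\Gamma)$ is $1$-injective (coordinatewise Hahn--Banach extension) it extends to a contraction $\Phi:Z\to\ell_\infty(\Gamma)$ with $\Phi|_{X\oplus Y}=\iota\circ Q_X$. Setting $P_1=\iota^{-1}\circ R\circ\Phi$, the bound $\|\Phi\|\le 1$ gives $\Phi(B_Z)\subseteq B_{\ell_\infty(\Gamma)}$ and hence $P_1(B_Z)\subseteq \iota^{-1}R(B_{\ell_\infty(\Gamma)})\subseteq A_X$; and for $(x,y)\in X\oplus Y$ one computes $P_1(x,y)=\iota^{-1}R\iota(x)=x$. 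An entirely parallel argument using condition $(2)$ and the sufficient enlargement $A_Y$ produces $P_2$.

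The substantive idea is the combination of the two norm inequalities, which make the coordinate projections contractive, with the $1$-injectivity of $\ell_\infty(\Gamma)$, which lets me transport the sufficient-enlargement projections $R$ (for $X$) and its analogue (for $Y$) to the whole of $Z$ while retaining control of the images of the unit balls. The one place that must be checked rather than asserted is that $P_1+P_2$ restricts to the identity on $X\oplus Y$ --- so that it is a genuine projection onto $X\oplus Y$ and not merely a norm-controlled contraction --- and this is where conditions $(1)$ and $(2)$ are both used, since $P_1$ and $P_2$ restrict to $Q_X$ and $Q_Y$ whose sum is the identity. I expect everything else to be routine.
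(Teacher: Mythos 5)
Your proof is correct, and it follows the paper's outer scheme exactly --- construct $P_X:Z\to X$ with $P_X(B_Z)\subset A_X$ and $P_X|_{X\oplus Y}=Q_X$, symmetrically $P_Y$ using condition (2), then combine and observe $P(B_Z)\subset P_X(B_Z)+P_Y(B_Z)\subset A_X+A_Y$ --- but the mechanism for producing $P_X$ is genuinely different. The paper stays inside $Z$ and passes to the quotient: it takes $\varphi_Y:Z\to Z/Y$ with $\ker\varphi_Y=Y$, notes that condition (1) forces $\varphi_Y|_X$ to be an isometry (the quotient norm of $\varphi_Y(x)$ is $\inf_{y}||(x,-y)||\ge ||x||$, with equality attained at $y=0$), and then applies the sufficient-enlargement property of $A_X$ to the \emph{new} isometric embedding $\varphi_Y(X)\subset Z/Y$, setting $P_X=Q_X\varphi_Y$; the containment $P_X(B_Z)\subset A_X$ comes from $\varphi_Y(B_Z)\subset B_{Z/Y}$. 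You instead go outward: condition (1) makes the coordinate map contractive, the $1$-injectivity of $\ell_\infty(\Gamma)$ extends $\iota Q_X$ to a contraction $\Phi$ on all of $Z$, and the enlargement property is invoked only once, for the single fixed embedding $X\subset\ell_\infty(\Gamma)$. The two devices are dual (quotienting by $Y$ versus extending into an injective space), and both isolate correctly where (1) and (2) enter. The paper's version is more economical, using nothing beyond the definition and the quotient norm; yours carries the extra (standard) input of coordinatewise Hahn--Banach, but in exchange it implicitly proves a reusable fact the paper only hints at in its remarks on $L_\infty(\mu)$: a set $A$ is a sufficient enlargement of $B_X$ as soon as a projection with $R(B)\subset A$ exists for \emph{one} isometric embedding of $X$ into an $\ell_\infty(\Gamma)$-space, since injectivity transports that single witness to every embedding. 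Your side checks (idempotence of $P_1+P_2$ because it fixes $X\oplus Y$ and maps into it; closedness of $A_X+A_Y$ by compactness in finite dimensions) are all sound.
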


\begin{proof}{Proof} Let $X\oplus Y\subset Z$ be an isometric embedding. We show that there
exists a projection $P_X:Z\to X$ such that $P_X(B_Z)\subset A_X$
and $P_X(Y)=\{0\}$. Let $\varphi_Y:Z\to Z/Y$ be a quotient mapping
with $\ker\varphi_Y=Y$. By the condition (1) the restriction
$\varphi_Y|_X$ is an isometry. Hence, there is a projection
$Q_X:Z/Y\to \varphi_Y(X)$ such that $Q_X(B_{Z/Y})\subset
\varphi_Y(A_X)$. Therefore we may identify $X$ with $\varphi_YX$
and $A_X$ with $\varphi_Y(A_X)$. We let $P_X=Q_X\varphi_Y$. It is
clear that all of the conditions are satisfied.
\medskip

In the same way, the condition (2) implies that there exists a
projection $P_Y:Z\to Y$ such that $P_Y(B_Z)\subset A_Y$ and
$P_Y(X)=0$.
\medskip

Let $P:Z\to X\oplus Y$ be defined by $Pz=(P_X z,P_Y z)$.
\medskip

It is easy to check that $P$ is a projection onto $X\oplus Y$. In
fact,
$$P(x,y)=(P_X(x,y),P_Y(x,y))=(x,y).$$
Also $P(B_Z)\subset P_X(B_Z)+P_Y(B_Z)\subset A_X+A_Y.$
\end{proof}

Now we are ready to construct projections $P_1$ and $P_2$ whose existence was claimed
before Lemma \ref{L1}. By Lemma \ref{L1} the set
$$A=\lambda(\ell{^k_2})B{^k_2}+\lambda(\ell{^{n-k}_2})B{^{n-k}_2}$$
is a sufficient enlargement for
$\ell_2^n=\ell_2^k\oplus\ell_2^{n-k}$. Let $P_2:L_\infty(\mu)\to
\ell_2^n$ be a projection corresponding to this sufficient
enlargement, that is, satisfying $P_2(B_{L_{\infty(\mu)}})\subset
A$. It is easy to see that the norm of this projection is $\le
((\lambda(\ell{^k_2}))^2+(\lambda(\ell{^{n-k}_2}))^2)^{1/2}$.
Hence it is not much more than $\lambda(\ell{^{n}_2})$. In fact,
$((\lambda(\ell{^k_2}))^2+(\lambda(\ell{^{n-k}_2}))^2)^{1/2}<\sqrt{2}
\lambda(\ell{^{n}_{2}})$.
\medskip

\begin{remark}{Remark} By \cite[Theorem 5]{archiv} the sufficient
enlargement
$A=\lambda(\ell{^k_2})B{^k_2}+\lambda(\ell{^{n-k}_2})B{^{n-k}_2}$
is minimal. Hence $\cl(P_2(B_{L_{\infty(\mu)}}))=A$ and $||P_2||=
((\lambda(\ell{^k_2}))^2+(\lambda(\ell{^{n-k}_2}))^2)^{1/2}$.
\end{remark}

Is it always like this? More precisely

\begin{problem}\label{P:C1} {\sl
Does there exists a universal constant $C\in [1,\infty)$ such that
for each triple $X_1\subset X_2\subset X_3$ of Banach spaces, with
$X_1$ and $X_2$ finite dimensional, there exist projections
$P_1:X_2\to X_1$ and $P_2:X_3\to X_2$, such that $||P_2||\le
C\lambda(X_2,X_3)$ and $||P_1P_2||=\lambda(X_1,X_3)$?}
\end{problem}

Another version of this problem (which will be particularly interesting if Problem \ref{P:C1} has a negative answer):
\medskip

\begin{problem}\label{P:C2} {\sl
Do there exist universal constants $C_1,C_2\in [1,\infty)$ such that for each triple $X_1\subset X_2\subset X_3$ of Banach spaces, with $X_1$ and $X_2$ finite dimensional, there exist projections $P_1:X_2\to X_1$ and $P_2:X_3\to X_2$, such that $||P_2||\le C_1\lambda(X_2,X_3)$ and
$||P_1P_2||=C_2\lambda(X_1,X_3)$?}
\end{problem}

\end{large}


\begin{thebibliography}{2}
\begin{small}

\bibitem{casazza01} P.~G.~Casazza, Approximation properties,
in: {\it ``Handbook of the Geometry of Banach Spaces''}, Volume
{\bf 1}, Edited by W. B. Johnson and J. Lindenstrauss,
North-Holland Publishing Co., 2001, pp.~273--316.

\bibitem{CK} P.~G.~Casazza and N.~J.~Kalton,
Notes on approximation properties in se\-pa\-rable Banach spaces, in:
{\it Geometry of Banach Spaces}, edited by P.~M\"uller and W.~Schachermayer,
Cambridge, Cambridge University Press, 1990, pp.~49--63.

\bibitem{Jo} W.~B.~Johnson, Finite-dimensional Schauder decompositions
in $\pi_\lambda$ and dual $\pi_\lambda$ spaces, {\it Illinois J. Math.},
{\bf 14} (1970), 642--647.

\bibitem{JRZ} W.~B.~Johnson, H.~P.~Rosenthal, and
M.~Zippin, On bases, finite dimensional decompositions and weaker structures in Banach spaces,
{\it Israel J. Math.}, {\bf 9} (1971), 488--506.

\bibitem{LT1} J.~Lindenstrauss and L.~Tzafriri, {\it Classical Banach
Spaces}, v.~{\bf I}, Berlin, Springer--Verlag, 1977.

\bibitem{qm}  M.~I.~Ostrovskii, Topologies on the set of all subspaces of a Banach space
and related questions of Banach space geometry, {\it Quaestiones
Math.} {\bf 17} (1994), 259--319.

\bibitem{extracta} M.~I.~Ostrovskii, Generalization of projection constants:
sufficient enlargements, {\it Extracta Math.} {\bf 11} (1996), 466--474.

\bibitem{archiv} M.~I.~Ostrovskii , Projections in normed
linear spaces and sufficient enlargements, {\it Archiv der Mathematik},
{\bf 71} (1998), no. 4, 315--324.

\bibitem{mpcps} M.~I.~Ostrovskii, Sufficient enlargements of minimal volume for
two-dimensional normed spaces, {\it Math. Proc. Cambridge Phil.
Soc.}, {\bf 137} (2004), 377-396.

\end{small}
\end{thebibliography}
\end{document}